\newcommand{\field}[1]{\mathbb{#1}}
\newcommand{\C}{\field{C}}
\newtheorem{defi}{Definition}[section]
\newtheorem{theo}[defi]{Theorem}
\newtheorem{co}[defi]{Corollary}
\newtheorem{re}[defi]{Remark}
\newtheorem{ex}[defi]{Example}
\font\tenmsy=msbm10
\def\Bbb#1{\hbox{\tenmsy#1}} 
\title[Bi-Lipschitz characterization of space curves]{Bi-Lipschitz characterization of space curves}
\author{Alexandre Fernandes}
\address[Alexandre Fernandes]{Departamento de Matem\'atica\\ Universidade Federal do Cear\'a, Av.
Humberto Monte, s/n Campus do Pici  60440-900\\ Brasil.}
\email{alex@mat.ufc.br}
\author{Zbigniew Jelonek}
\address[Zbigniew  Jelonek]{Instytut Matematyczny\\
Polska Akademia Nauk\\
\'Sniadeckich 8, 00-656 Warszawa\\
Poland }
\email{najelone@cyf-kr.edu.pl}
\keywords{ algebraic  curves, 
 bi-Lipschitz homeomorphism}
\subjclass[2020]{14R10, 14H50, 58K30}
\thanks{Alexandre Fernandes is  partially supported by CNPq-Brazil grant 304700/2021-5. Zbigniew Jelonek is partially supported by the grant of Narodowe Centrum Nauki number 2019/33/B/ST1/00755. }
\date{}
\begin{document}

\maketitle

\bibliographystyle{alpha}

\begin{abstract}
In the paper \cite{renato} Renato Targino shows that bi-Lipschitz type of plane curve is determined by the local ambient topological properties of curves. Here we show that it is not longer true in higher dimensions. However we show that bi-Lipschitz type of space curves is determined by the number of singular points and by the local ambient topological type of a generic projection of such curves into the affine plane. 
\end{abstract}

\section{Introduction}
Let $\Lambda, \Gamma\subset \C^n$ be two algebraic curves. In general if the germs $(\Lambda, p)$ and $(\Gamma, q)$ are {\it ambient topologically equivalent}, in the sense that there exists a germ of homeomorphism $\varphi\colon (\C^n,p)\rightarrow (\C^n,q)$ such that $\varphi (\Lambda,p)=(\Gamma,q)$, then this does not imply that the germs $(\Lambda, p)$ and $(\Gamma, q)$ are {\it bi-Lipschitz equivalent} in the sense that there exists a germ of bi-Lipschitz homeomorphism $\varphi\colon (\Lambda,p)\rightarrow (\Gamma,q)$ (with respect to the outer metric). Moreover we have even the following global result:

\begin{ex}
Let $n>2$. For every  irreducible singular algebraic curve  $\Gamma\subset \C^n$,  there  is an algebraic curve $\Lambda$, such that curves  $\Gamma, \Lambda$  are not bi-Lipschitz  equivalent but they are topologically equivalent.

\vspace{5mm}
{\rm Indeed, let $\Lambda$ be the normalization of $\Gamma$. Hence $\Lambda$ is an algebraic curve and  still can be embedded into $\C^n$ as smooth algebraic curve. But $\Lambda$ has no singularities  and by \cite{Bir}, \cite{Sam}  curves  $\Gamma, \Lambda$  are not bi-Lipschitz  equivalent. We have a canonical semi-algebraic  mapping $\Phi: \Lambda\to\Gamma,$ which by the assumption is a homeomorphism. Now, by Theorem 6.6 in \cite{jel} we can extend the mapping 
$\phi: \Lambda\to \Gamma$ to a global semi-algebraic homeomorphism $\Phi: \C^n\to\C^n.$}
\end{ex}

The situation is different in the dimension $n=2.$ In this dimension two germs are ambient topologically equivalent if and only if they are 
outer bi-Lipschitz equivalent (See \cite{p-t}, \cite{fer}, \cite{n-p}). In fact, in the paper \cite{renato},  Renato Targino classifies plane algebraic curves, up to global bi-Lipschitz homeomorphisms, in terms of local ambient topological properties of the projective closure of those curves. More precisely, two algebraic curves $\Lambda, \Gamma\subset \C^n$ are said {\it bi-Lipschitz equivalent} if there exists a bi-Lipschitz homeomorphism $\varphi\colon\Lambda\rightarrow\Gamma$ (with respect to the outer Euclidean metric induced from $\C^n$). In the paper \cite{renato},  Renato Targino shows the Theorem \ref{renato} below. 

\bigskip

\noindent{{\bf Notation.} Consider $\C^2$ embedded into $\Bbb {CP}^2$. Given an algebraic plane curve $X\subset\C^2$, let $\overline{X}\subset \Bbb{C P}^2$ be the respective projective closure of $X$ and let $\pi_{\infty}$ denote the line at infinity in $\Bbb{C P}^2$.}

\begin{theo}[\cite{renato}, Theorem 1.6]\label{renato}
	
Let $\Lambda, \Gamma\subset \C^2$ be two algebraic plane curves with irreducible components  $\Lambda = \bigcup_{i\in I} \Lambda_j$ and $\Gamma = \bigcup_{j\in J} \Gamma_i$.   The following statements are mutually equivalent.
\begin{enumerate}
	\item The curves $\Lambda$ and $\Gamma$ are bi-Lipschitz equivalent.
	\item There are bijections $\sigma \colon I\rightarrow J$ and $\rho$ between the set of singular points of $\overline{\Lambda}\cup\pi_{\infty}$ and the set of singular points of $\overline{\Gamma}\cup\pi_{\infty}$ such that $\rho(p)\in\pi_{\infty}$ if and only if $p\in\pi_{\infty}$, $(\overline{\Lambda}\cup\pi_{\infty}, p)$ is topologically equivalent to $(\overline{\Gamma}\cup\pi_{\infty},\rho(p))$, $(\overline{\Lambda}_i\cup\pi_{\infty}, p)$ is topologically equivalent to $(\overline{\Gamma}_{\sigma(i)}\cup\pi_{\infty},\rho(p))$ ($\forall i\in I$), for all singular point $p$ of  $\overline{\Lambda}\cup\pi_{\infty}$.
\end{enumerate}

\end{theo}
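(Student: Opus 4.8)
The plan is to reduce the global bi-Lipschitz classification to a finite collection of local problems --- one at each singular point of $\overline{\Lambda}\cup\pi_\infty$, \emph{including} the points lying on $\pi_\infty$, which record the behaviour of the ends of the affine curve --- glued along a bi-Lipschitz trivial core. The engine throughout is the dimension-two equivalence between the ambient topological type and the outer bi-Lipschitz type of a plane curve germ (Pham--Teissier \cite{p-t}, Fernandes \cite{fer}, Neumann--Pichon \cite{n-p}); both are encoded by the same combinatorial data, namely the Puiseux characteristic exponents of the branches together with their pairwise contact orders.

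First I would record two structural facts. The irreducible components of $\Lambda$ are precisely the closures of the connected components of the smooth locus $\Lambda_{\mathrm{reg}}$, so any homeomorphism of $\Lambda$ onto $\Gamma$ preserving the singular set induces the bijection $\sigma\colon I\to J$. Secondly, a point $p\in\Lambda$ is singular if and only if the germ $(\Lambda,p)$ is \emph{not} outer bi-Lipschitz equivalent to a smooth disk (even a cusp, though topologically a disk, is metrically non-trivial), and a global bi-Lipschitz homeomorphism is proper, hence carries ends to ends; this is what lets a bi-Lipschitz map detect, and preserve, all the special points and so produce $\rho$.

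For (2) $\Rightarrow$ (1) I would build the homeomorphism piecewise. Around each affine singular point the hypothesis gives an ambient topological equivalence of germs, which by the dimension-two result upgrades to an outer bi-Lipschitz equivalence $(\Lambda,p)\to(\Gamma,\rho(p))$, the per-component condition ensuring these local maps respect the labels carried by $\sigma$. At the ends one needs the analogous statement at infinity, discussed below. On the compact core, which is bi-Lipschitz trivial, the matching of the boundary links supplied by the local equivalences extends to a bi-Lipschitz map, and concatenating the pieces yields the global homeomorphism. The converse (1) $\Rightarrow$ (2) runs in reverse: the structural facts above give $\rho$ and $\sigma$, and the germ of $\varphi$ at each special point is a local outer bi-Lipschitz equivalence, which by the dimension-two result delivers the required topological equivalences, including the refined per-component ones.

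The main obstacle is the geometry at infinity. The naive idea of sending a point of $\pi_\infty$ to the origin by a projective inversion and quoting the local theory fails, because inversion is not bi-Lipschitz: it distorts the metric by the conformal factor $1/|z|^2$, so the bi-Lipschitz type of an end is not literally the local bi-Lipschitz type of $\overline{\Lambda}$ at the infinite point. The correct route is to describe the outer Lipschitz geometry of an end intrinsically, expressing the growth rates of the distances between branches running to infinity (and between a branch and the others) in terms of their orders of contact with $\pi_\infty$ and with one another, and then to verify that this invariant is exactly the one read off from the embedded topological type of $\overline{\Lambda}\cup\pi_\infty$ near $\pi_\infty$ --- mirroring the local dictionary, but with contact orders ``at infinity.'' A secondary technical difficulty is the gluing itself: unlike in the topological category, concatenating bi-Lipschitz maps across the boundary links requires uniform control of the constants, which I would extract from the conical (respectively conical-at-infinity) structure of the removed neighbourhoods.
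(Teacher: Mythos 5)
A preliminary remark: the paper does not prove this statement; it is quoted verbatim from Targino (\cite{renato}, Theorem 1.6) and used as a black box, so there is no internal proof to compare yours against. Judged on its own terms, your outline follows the strategy of the cited source: decompose the curve into germs at the singular points, ends at infinity, and a compact core; use the two-dimensional dictionary between ambient topological type and outer bi-Lipschitz type of plane curve germs (\cite{p-t}, \cite{fer}, \cite{n-p}); and treat the ends by an intrinsic description of the Lipschitz geometry at infinity rather than by a (non-bi-Lipschitz) projective inversion. You correctly identify the behaviour at infinity as the technical heart, and your proposed fix --- contact orders at infinity mirroring the local Puiseux data --- is the right idea, though in your write-up it remains a programme rather than an argument.

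The concrete gap is in the gluing step of (2) $\Rightarrow$ (1). You assert that the compact core is ``bi-Lipschitz trivial'' and that the boundary identifications supplied by the local equivalences extend across it. But before any extension question arises, the two cores must be homeomorphic: for each $i$, the piece of $\Lambda_i$ left after removing conical neighbourhoods of the singular points and of the ends is a compact surface whose genus is that of the normalization of $\overline{\Lambda}_i$, and nothing in your purely local matching forces the genera of $\Lambda_i$ and $\Gamma_{\sigma(i)}$ to agree. This is the one genuinely global input, and it has to be extracted from the hypotheses via the genus--degree formula (Theorem \ref{serre} in this paper): the local types at infinity determine the intersection multiplicities of $\overline{\Lambda}_i$ with $\pi_\infty$ and hence $\deg\overline{\Lambda}_i=\deg\overline{\Gamma}_{\sigma(i)}$, the local types at the singular points determine the delta invariants, and only then does the equality of genera follow, making the cores homeomorphic (same genus, same number of boundary circles) and hence bi-Lipschitz equivalent. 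Without this step the construction of the global homeomorphism cannot start.
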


In this note we address the global classification problem given by bi-Lipschitz equivalence of algebraic space curves, i. e., algebraic curves in $\C^n$ ($n>2$). As in the local case, we obtain a characterization of the bi-Lipschitz equivalence classes of algebraic space curves by looking for their generic plane projections. In this direction, we point out that: given an algebraic curve $\Lambda$ in $\C^n$, any two generic plane projection are bi-Lipschitz equivalent, and this is why we can refer to a generic plane projection of $\Lambda$ as {\it the generic plane projection of} $\Lambda$. We are ready to state the main results of the paper.

\begin{theo}\label{main1} Two irreducible algebraic curves in $\C^n$ are bi-Lipschitz equivalent if and only if they have the same number of singular points and their generic plane projections are bi-Lipschitz equivalent.
\end{theo}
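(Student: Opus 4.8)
The plan is to reduce the problem to Targino's plane classification (Theorem~\ref{renato}) applied to the generic plane projections $\Lambda'=\pi(\Lambda)$ and $\Gamma'=\pi'(\Gamma)$. The technical heart, which I would isolate as a lemma, is a precise description of a generic linear projection $\pi\colon\C^n\to\C^2$ restricted to the curve. For a generic centre the secant variety of $\Lambda$ meets it in finitely many points and its tangent developable misses it, so $\pi|_\Lambda\colon\Lambda\to\Lambda'$ is finite and birational, injective off finitely many points, and its only new singularities on $\Lambda'$ are $k$ ordinary nodes (the apparent double points); near every other point $\pi|_\Lambda$ is a local outer bi-Lipschitz homeomorphism. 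Combining this with the germ theorem behind \cite{fer}, \cite{n-p}, \cite{p-t}---that the outer bi-Lipschitz type of a curve germ coincides with the embedded topological type of its generic plane projection---one sees that $\pi$ carries each affine singular germ $(\Lambda,p_i)$ and each branch of $\overline{\Lambda}$ at infinity to a germ of $\overline{\Lambda'}\cup\pi_\infty$ of the same topological type, creating only the $k$ nodes.

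Suppose first that $\varphi\colon\Lambda\to\Gamma$ is a bi-Lipschitz homeomorphism. Since a point of a complex curve is smooth exactly when its germ has multiplicity one, a bi-Lipschitz invariant, $\varphi$ restricts to a bijection $\operatorname{Sing}\Lambda\to\operatorname{Sing}\Gamma$, giving $\#\operatorname{Sing}\Lambda=\#\operatorname{Sing}\Gamma$. Moreover $\varphi$ matches the affine singular germs of $\Lambda$ and $\Gamma$ up to outer bi-Lipschitz type, sends ends to ends (a bi-Lipschitz map pushes infinity to infinity), and preserves the topology; by the lemma these data reappear verbatim among the singularities of $\overline{\Lambda'}\cup\pi_\infty$ and $\overline{\Gamma'}\cup\pi_\infty$. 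The only datum not transported a priori is the number of new nodes, so the point is to prove $k_\Lambda=k_\Gamma$. I would deduce this from the genus formula for the nodal plane curve $\Lambda'$, namely $k=\binom{d-1}{2}-g-\delta_{\mathrm{aff}}-\delta_\infty$, by checking that every term on the right is a bi-Lipschitz invariant of $\Lambda$: the geometric genus $g$ is topological, the invariants $\delta_{\mathrm{aff}}$ and $\delta_\infty$ are determined by the topological types of the germs, and the degree $d$ is the sum over the ends of the branch multiplicities at infinity, each encoded in the topological type of $\overline{\Lambda'}\cup\pi_\infty$. With $k_\Lambda=k_\Gamma$ in hand, Theorem~\ref{renato} gives that $\Lambda'$ and $\Gamma'$ are bi-Lipschitz equivalent.

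For the converse I would run the bookkeeping backwards. From the bi-Lipschitz equivalence of $\Lambda'$ and $\Gamma'$, Theorem~\ref{renato} provides a topological-type matching of the singularities of $\overline{\Lambda'}\cup\pi_\infty$ and $\overline{\Gamma'}\cup\pi_\infty$; the affine ones consist of the projected intrinsic singularities together with $k_\Lambda$, respectively $k_\Gamma$, nodes. Comparing the total counts of affine singular points and invoking the hypothesis $\#\operatorname{Sing}\Lambda=\#\operatorname{Sing}\Gamma$ forces $k_\Lambda=k_\Gamma$; cancelling these equally many nodal germs leaves a type-preserving bijection between the intrinsic affine singular germs of $\Lambda$ and $\Gamma$, while the matching at infinity lifts to the branches of $\overline{\Lambda}$ and $\overline{\Gamma}$. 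It remains to assemble a global outer bi-Lipschitz homeomorphism $\Lambda\to\Gamma$ out of this local and asymptotic matching data together with the induced homeomorphism of normalizations; I would construct it brick by brick---bi-Lipschitz on small balls about the matched singular points, on neighbourhoods of the matched ends, and on the bounded smooth remainder---and glue.

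I expect the principal obstacle to be the projection lemma: establishing simultaneously that a generic projection creates only nodes and is elsewhere a local bi-Lipschitz homeomorphism, and that it preserves the outer bi-Lipschitz type of every germ together with the geometry at infinity, since this is exactly what licenses reading off all of Targino's invariants on the plane projections. The global gluing in the converse is the second delicate point, as one must control the Lipschitz constants uniformly across the finitely many local models and the ends.
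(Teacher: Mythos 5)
Your proposal is correct and follows essentially the same route as the paper: both directions reduce to Targino's plane criterion via the generic projection, using that the projection is bi-Lipschitz at infinity (the paper's Theorem~\ref{infinity}) and a local bi-Lipschitz homeomorphism whose only new singularities are nodes, and both establish the equality of the node counts from the genus/Serre formula together with the bi-Lipschitz invariance of degree, genus and local germ types. The only cosmetic differences are that the paper cites Corollary~3.2 of \cite{bfs} for the invariance of the degree (rather than reading it off the branch multiplicities at infinity) and invokes Targino's gluing criterion (Remark~\ref{remark}) instead of reassembling the global homeomorphism brick by brick.
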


As a direct consequence of Theorem \ref{main1} and Theorem \ref{renato} (Theorem 1.6 of \cite{renato}) we get a characterization of bi-Lipschitz equivalence classes of irreducible algebraic space curves in terms of the local ambient topology of its generic plane projection.

\begin{co}\label{co}Let $\Lambda, \Gamma\subset \C^n $ ($n>2$) be two irreducble algebraic  curves. Let $X_{\Lambda}$ and $X_{\Gamma}$ denote their generic plane projections (respectively).  The following statements are mutually equivalent:
	\begin{enumerate}
		\item The curves $\Lambda$ and $\Gamma$ are bi-Lipschitz equivalent.
		\item  The curves $\Lambda$ and $\Gamma$ have the same number of singular points and there is a bijection $\rho$ between the set of singular points of $\overline{X}_{\Lambda}\cup\pi_{\infty}$ and the set of singular points of $\overline{X}_{\Gamma}\cup\pi_{\infty}$ such that $\rho(p)\in\pi_{\infty}$ if and only if $p\in\pi_{\infty}$ and $(\overline{X}_{\Lambda}\cup\pi_{\infty}, p)$ is topologically equivalent to $(\overline{X}_{\Gamma}\cup\pi_{\infty},\rho(p))$,

	\end{enumerate}	
\end{co}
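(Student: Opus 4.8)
The plan is to obtain Corollary \ref{co} by composing the two equivalences already established: Theorem \ref{main1}, which reduces the bi-Lipschitz classification of an irreducible space curve to the pair consisting of its number of singular points and the bi-Lipschitz type of its generic plane projection, together with Theorem \ref{renato}, which in turn expresses the bi-Lipschitz type of a plane curve entirely through the local ambient topology of its projective closure and the line at infinity. No new geometric input is needed; the whole content is a careful translation between the two statements.

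First I would record the elementary geometric fact that if $\Lambda\subset\C^n$ is irreducible, then its generic plane projection $X_{\Lambda}$ is again an irreducible plane curve: a generic linear projection $\C^n\to\C^2$ restricts to a birational morphism on $\Lambda$, so its image inherits irreducibility, and likewise for $\Gamma$ and $X_{\Gamma}$. This is precisely the hypothesis under which Theorem \ref{renato} is to be applied. Specializing that theorem to the irreducible curves $X_{\Lambda}$ and $X_{\Gamma}$, the index sets labelling the irreducible components become singletons, so the component-matching bijection $\sigma$ is forced; moreover the component-level topological condition then literally coincides with the whole-curve condition that $(\overline{X}_{\Lambda}\cup\pi_{\infty},p)$ be topologically equivalent to $(\overline{X}_{\Gamma}\cup\pi_{\infty},\rho(p))$, and may be omitted. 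Thus, for these two curves, Theorem \ref{renato} asserts that $X_{\Lambda}$ and $X_{\Gamma}$ are bi-Lipschitz equivalent if and only if there is a bijection $\rho$ between the singular points of $\overline{X}_{\Lambda}\cup\pi_{\infty}$ and those of $\overline{X}_{\Gamma}\cup\pi_{\infty}$ preserving membership of $\pi_{\infty}$ and the local ambient topological type, which is exactly the topological clause in item (2) of Corollary \ref{co}.

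Finally I would chain the two equivalences. By Theorem \ref{main1}, the curves $\Lambda$ and $\Gamma$ are bi-Lipschitz equivalent if and only if they share the same number of singular points and their generic plane projections $X_{\Lambda}$, $X_{\Gamma}$ are bi-Lipschitz equivalent; substituting the topological reformulation of the latter obtained in the previous step turns this into item (2) of the corollary, and the reverse implication is read off the same chain. The main, though mild, obstacle I anticipate is purely one of bookkeeping: I must confirm that passing to irreducible curves genuinely renders the component condition of Theorem \ref{renato} redundant rather than vacuous, and that the \emph{generic plane projection} entering Theorem \ref{main1} is the same object --- well defined up to bi-Lipschitz, hence topological, equivalence by the remark preceding Theorem \ref{main1} --- whose projective closure $\overline{X}_{\Lambda}$ is used in Theorem \ref{renato}, so that the topological data in item (2) are unambiguously attached to $\Lambda$.
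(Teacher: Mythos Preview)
Your proposal is correct and follows exactly the approach the paper indicates: the corollary is stated there as a direct consequence of Theorem \ref{main1} combined with Theorem \ref{renato}, with no further argument given. Your additional bookkeeping, checking irreducibility of the generic plane projection and observing that the component bijection $\sigma$ in Theorem \ref{renato} becomes trivial, is a welcome elaboration of what the paper leaves implicit.
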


\section{Main Result}
Let $B^k(R)\subset \C^k$ denote the $2k$ real dimensional  Euclidean ball of radius $R$ and center at $0.$

\begin{theo}\label{infinity}
Let $n>2$. If $X\subset \C^n$ is a closed algebraic curve,  then there are a real number $r>0$ and the proper projection 
$\pi: X\to\C^2$ such that  $\pi: X\setminus \pi^{-1}(B^2(r))\to \C^2\setminus B^2(r)$ is a bi-Lipchitz embedding.
\end{theo}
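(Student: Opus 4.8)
\section*{Proof proposal}

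The plan is to realize $\pi$ as a generic linear projection $\C^n\to\C^2$ and to control its behaviour at infinity through the set of limiting directions of secants of $X$. Identify $\C^n$ with the affine chart $\{x_0\neq0\}$ of $\Bbb{CP}^n$ and let $H_\infty=\{x_0=0\}\cong\Bbb{CP}^{n-1}$ be the hyperplane at infinity; write $p_1,\dots,p_s$ for the points of $\overline X\cap H_\infty$, regarded also as directions $v_1,\dots,v_s\in\C^n$. For a linear surjection $\pi\colon\C^n\to\C^2$ with kernel $K=\ker\pi$ ($\dim K=n-2$), the indeterminacy centre of the induced map $\Bbb{CP}^n\dashrightarrow\Bbb{CP}^2$ is $\Bbb P(K)\subset H_\infty$, of dimension $n-3$. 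The object that governs everything is the set of directions of secants ``at infinity'',
\[
\Sigma \;=\; \bigcap_{R>0}\ \overline{\big\{\,[x-y]\in\Bbb{CP}^{n-1}\ :\ x,y\in X,\ x\neq y,\ \|x\|,\|y\|\ge R\,\big\}} .
\]
Note that each $v_i$ lies in $\Sigma$ (take $x,y$ on a single branch tending to $p_i$), so controlling $\Sigma$ simultaneously controls the escaping directions.

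The first and decisive step is to prove that $\Sigma$ is a closed subvariety of $\Bbb{CP}^{n-1}$ of dimension at most $1$. To this end I would introduce the incidence variety $W=\overline{\{(x,y,[x-y]) : x,y\in X,\ x\neq y\}}\subset\overline X\times\overline X\times H_\infty$ together with the projection $q\colon W\to\overline X\times\overline X$. Since the direction $[x-y]$ is determined by $(x,y)$, the map $q$ is generically one-to-one onto each two-dimensional component of $\overline X\times\overline X$; as $W$ is a union of two-dimensional components mapping dominantly and generically finitely, no fibre of $q$ over a single point can be two-dimensional, so all point-fibres have dimension at most $1$. By construction every element of $\Sigma$ is a limit $\lim_k[x_k-y_k]$ with $(x_k,y_k)\to(p_i,p_j)$ for some $i,j$, hence lies in the image in $H_\infty$ of the fibre $q^{-1}(p_i,p_j)$; thus $\Sigma$ is contained in a finite union of such images and $\dim\Sigma\le1$. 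This is the step I expect to be the main obstacle, since it is where the algebraicity of $X$ is genuinely used (finiteness of the points at infinity and the behaviour of chords between two branches running off to infinity): the crude real-dimension estimate of a cluster set is not sharp enough, whereas the fibre-dimension argument for the birational $q$ gives the required bound.

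With $\dim\Sigma\le1$ in hand, choose $\pi$ generically so that $\Bbb P(K)\cap\Sigma=\emptyset$. This is possible because $\dim\Bbb P(K)+\dim\Sigma\le(n-3)+1=n-2<n-1=\dim H_\infty$, and the set of kernels $K$ with this disjointness is open and dense in the Grassmannian of $(n-2)$-planes. Since $v_i\in\Sigma$ we get $v_i\notin K$ for every $i$, whence $\pi(x_k)/\|x_k\|\to\pi(v_i)\neq0$ along any sequence $x_k\to\infty$ in $X$; consequently $\|\pi(x)\|\to\infty$ as $\|x\|\to\infty$ on $X$, so $\pi|_X$ is proper.

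It remains to verify the two Lipschitz inequalities near infinity. The upper bound is automatic, $\|\pi(x)-\pi(y)\|\le\|\pi\|_{\mathrm{op}}\,\|x-y\|$, because $\pi$ is linear. For the lower bound, the compactness of $\Sigma$ and its disjointness from $\Bbb P(K)$ yield a compact neighbourhood $\Sigma'\supset\Sigma$ in $\Bbb{CP}^{n-1}$ with $\Sigma'\cap\Bbb P(K)=\emptyset$, and hence a constant $\delta>0$ with $\|\pi(v)\|\ge\delta\|v\|$ whenever $[v]\in\Sigma'$. As $\Sigma$ is the decreasing intersection of the compacta appearing in its definition, there is $R_0$ such that $[x-y]\in\Sigma'$ for all $x,y\in X$ with $\|x\|,\|y\|\ge R_0$, giving $\|\pi(x)-\pi(y)\|=\|\pi(x-y)\|\ge\delta\|x-y\|$; in particular $\pi$ is injective on $\{x\in X:\|x\|\ge R_0\}$. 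Finally, using properness, pick $r>0$ so large that $\|\pi(x)\|\ge r$ forces $\|x\|\ge R_0$. Then $X\setminus\pi^{-1}(B^2(r))\subset\{x\in X:\|x\|\ge R_0\}$, and on this set $\pi$ is a bijection onto its image in $\C^2\setminus B^2(r)$ satisfying $\delta\|x-y\|\le\|\pi(x)-\pi(y)\|\le\|\pi\|_{\mathrm{op}}\|x-y\|$, that is, a bi-Lipschitz embedding, as required.
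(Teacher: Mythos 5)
Your argument is correct and follows essentially the same route as the paper: both bound the set of limiting secant directions at infinity by dimension $\le 1$ using the closure of the graph of $(x,y)\mapsto[x-y]$ and the fibres of its projection to $\overline{X}\times\overline{X}$, then choose the projection centre to avoid that set and obtain the lower Lipschitz bound by a compactness/contradiction argument on directions in $\Bbb{CP}^{n-1}$. The only real difference is organizational: you project to $\C^2$ in one step via a generic $(n-2)$-dimensional kernel (and treat the diagonal and off-diagonal limit pairs uniformly through fibre dimensions), whereas the paper projects from a single generic point at infinity, handles the off-diagonal pairs via the finitely many lines $L_{ij}$, and inducts on $n$.
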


\begin{proof}
Of course it is enough to prove our theorem for a projection $\pi: X\to\C^{n-1}$ and then use induction on the number $n$. Consider $\C^n$ embedded into $\Bbb {CP}^n$. Let $X'$ be the projective closure of $X$ in $\Bbb {CP}^n.$

Let $Z:=X'\setminus X=\{ z_1,...,z_r\}.$ For $i\not=j$, let us denote by $L_{ij}$ the line $\overline{z_i,z_j}$ and let us denote by $\pi_\infty$ the hyperplane at infinity of $\C^n.$ Thus $\pi_\infty\cong \Bbb {C P}^{n-1}$ is a projective space of dimension $n-1.$ For a non-zero vector $v\in \C^n$, let $[v]$ denote the corresponding point in $\pi_\infty.$

Let $\Delta =\{ (x,y)\in  {X}\times {X} : x=y\}.$ Consider the mapping $$A:
{X}\times {X}\setminus \Delta \ni (x,y)\to
[x-y]\in \pi_\infty.$$ 

Let $\Gamma$ be the graph of $A$ in $X'\times X'\times {\pi_\infty}$ and take $\Gamma':=\overline{\Gamma}$
(we take this closure in $X'\times X' \times {\pi_\infty}$).  Let 
$p:\Gamma' \to \pi_\infty$ and
$q:\Gamma' \to X'\times X'$ be the canonical projections. Note that, for $z_i\in Z$, the set $q^{-1}(z_i,z_i)=Z_{i}$ is an algebraic set of dimension at most $1$.   Let  $Z_i'=p(Z_i).$ The set $W:=\bigcup Z_i'$ is a closed subset of $\pi_\infty$ of dimension  at most $1$, hence   $\pi_\infty\setminus W \not=\emptyset.$ Let $Q\in \pi_\infty\setminus (W \cup \bigcup L_{ij})$, since $W$ is closed, there exists a small ball $B_1\subset \pi_\infty$ with center at $Q$ such that $B_1\cap (W\cup \bigcup L_{ij})=\emptyset.$

Let $B^n(R)$ be a large ball in $\C^n$ and take  $V(R)=(X'\setminus B^n(R))\times (X'\setminus B(R))$ and let $O_R=q^{-1}(V(R)).$ Hence  $O_R$ is a neighborhood of $W$ in
$\pi_\infty$. We show that for $R$ sufficiently large, if $x,y\in X\setminus B^n(R),\ x\not=y$, then $A(x,y)\not\in B_1.$
Indeed, in other case, take $R_k=k \to \infty.$ Hence for every $k\in \Bbb N$ we have points $x_k,y_k\in X\setminus B^n(R),$ such that $A(x_k,y_k)\in B_1.$
But then $x_k,y_k\to \infty$, this means that we can assume that $x_k\to z_i$ and $y_k\to z_j.$ If $z_i=z_j$, then  $\lim A(x_k,y_k)=\lim p((x_k,y_k, [x_k-y_k]))\in p(Z_i)=Z_i'.$ It is a contradiction. If $z_i\not=z_j$, then $\overline{x_k,y_k}\to\overline{z_i,z_j}=L_{ij}$ and this means that $L_{ij}\cap B_1\not=\emptyset$, a contradiction again. 

Hence, there is a number $R$ sufficiently large, such that  if  $x,y\in X\setminus B(R)), \ x\not=y$, then $A(x,y)\not\in B_1.$ Let $\Sigma=A((X\setminus B(R))\times (X\setminus B(R))\setminus \Delta).$ Then $Q\not\in \overline{\Sigma}.$ Take a hyperplane $H\subset \C^n$ in this way, such that $Q\not\in \overline{H}.$ Of course $H\cong \C^{n-1}.$ Let $\pi:\C^n\to H$ be the projection with center $Q$ and let $K=\pi(X\cap B^n(R)).$ It is a compact set, hence there exists a ball $B^{n-1}(r)$ such that $K\subset B^{n-1}(r).$ 

Consider the proper mapping $\pi:  X\setminus \pi^{-1}(B^{n-1}(r))\to H\setminus B^{n-1}(r)$. We show that the projection $\pi$  is a bi-Lipschitz embedding. Indeed, since a complex linear isomorphism is a bi-Lipschitz mapping, we can assume that 
$Q=(0:0:...0:1)$ and $H=\{x_n=0\}.$ Of course $||p(x)-p(y)||\le ||x-y||.$ Assume that $p$ is not bi-Lipschitz, i. e., there is a sequence of points $x_j,y_j\in X\setminus \pi^{-1}(B^{n-1}(r))$ such that $$\frac{||p(x_j)-p(y_j)||}{||x_j-y_j||}\to 0,$$ as $n\to \infty.$ Let $x_j-y_j=(a_1(j),...,a_{n-1}(j),b(j))$ and denote by $P_j$ the corresponding point $(a_1(j):...:a_{n-1}(j):b(j))$ in $\Bbb {CP}^{n-1}.$ Hence $$P_j=\frac{(a_1(j):...:a_{n-1}(j):b(j))}{||x_j-y_j||}.$$ Since $\displaystyle\frac{(a_1(j),...,a_{n-1}(j))}{||x_j-y_j||}=
\frac{p(x_j)-p(y_j)}{||x_j-y_j||}\to 0$, we have that $P_j\to Q.$ It is a contradiction.

\end{proof}

 In the sequel we will use the following theorem of Jean-Pierre Serre (see \cite{mil}, p. 85):

\begin{theo}\label{serre}
If $\Gamma$ is an irreducible curve of degree $d$ and genus $g$  in the complex projective plane,
then $$\frac{1}{2} (d-1)(d-2)= g + \sum_{z\in Sing(\Gamma)} {\delta}_z,$$
where $\delta_z$ denotes the delta invariant of a point $z$.
\end{theo}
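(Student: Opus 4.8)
The formula compares two notions of genus attached to the irreducible curve $\Gamma$: the \emph{arithmetic genus} $p_a$, which depends only on the degree $d$, and the \emph{geometric genus} $g$, which is the genus of the nonsingular model. The plan is to prove the two identities
$$p_a = \frac{1}{2}(d-1)(d-2) \qquad \text{and} \qquad p_a - g = \sum_{z\in Sing(\Gamma)} \delta_z,$$
and then simply combine them. The second identity captures the fact that each singular point forces the arithmetic genus to exceed the geometric genus by exactly $\delta_z$.

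First I would compute the arithmetic genus directly from the defining equation. Writing $\Gamma = \{F = 0\}\subset \Bbb{CP}^2$ with $F$ homogeneous of degree $d$, multiplication by $F$ gives the short exact sequence of sheaves
$$0 \to \mathcal{O}_{\Bbb{CP}^2}(-d) \to \mathcal{O}_{\Bbb{CP}^2} \to \mathcal{O}_\Gamma \to 0.$$
Taking Euler characteristics and using the standard values $\chi(\mathcal{O}_{\Bbb{CP}^2}) = 1$ and $\chi(\mathcal{O}_{\Bbb{CP}^2}(-d)) = \binom{d-1}{2}$ (read off from the Hilbert polynomial $\chi(\mathcal{O}_{\Bbb{CP}^2}(m)) = \tfrac{1}{2}(m+1)(m+2)$ at $m = -d$), one obtains $\chi(\mathcal{O}_\Gamma) = 1 - \tfrac{1}{2}(d-1)(d-2)$. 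Since by definition $\chi(\mathcal{O}_\Gamma) = 1 - p_a$ for a connected curve, this yields $p_a = \tfrac{1}{2}(d-1)(d-2)$.

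Next I would bring in the normalization $\nu\colon \tilde{\Gamma} \to \Gamma$, where $\tilde{\Gamma}$ is a compact Riemann surface of genus $g$. The delta invariant is $\delta_z = \dim_{\C}(\nu_*\mathcal{O}_{\tilde{\Gamma}}/\mathcal{O}_\Gamma)_z$, the length of the quotient of the integral closure of the local ring $\mathcal{O}_{\Gamma,z}$ by the local ring itself. The key tool is the exact sequence
$$0 \to \mathcal{O}_\Gamma \to \nu_*\mathcal{O}_{\tilde{\Gamma}} \to \mathcal{S} \to 0,$$
where $\mathcal{S}$ is a skyscraper sheaf with stalk of dimension $\delta_z$ at each $z\in Sing(\Gamma)$ and zero elsewhere. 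Since $\nu$ is finite, $\chi(\nu_*\mathcal{O}_{\tilde{\Gamma}}) = \chi(\mathcal{O}_{\tilde{\Gamma}}) = 1 - g$, while $\chi(\mathcal{S}) = \sum_z \delta_z$; additivity of Euler characteristics then gives $1 - p_a = (1 - g) - \sum_z \delta_z$, which is precisely $p_a - g = \sum_z \delta_z$.

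The main obstacle is the local bookkeeping around the normalization sequence: one must check that $\mathcal{S}$ is supported only at the singular points (at a smooth point the local ring is already integrally closed, so the stalk vanishes) and that its stalk at $z$ has the claimed finite dimension $\delta_z$. This is a purely local statement about the integral closure of $\mathcal{O}_{\Gamma,z}$ in its total ring of fractions, and it is exactly where the definition of the delta invariant enters. Once this local analysis is secured, the global cohomological computation and the final assembly $\tfrac{1}{2}(d-1)(d-2) = p_a = g + \sum_{z\in Sing(\Gamma)} \delta_z$ follow at once.
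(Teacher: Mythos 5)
The paper offers no proof of this statement: it is quoted as a classical result (Serre's genus formula), with a pointer to Milnor's book, and is used as a black box in the proof of Theorem \ref{main1}. So there is nothing of the authors' to compare your argument against; what matters is whether your proof stands on its own, and it does. The decomposition into $p_a=\tfrac{1}{2}(d-1)(d-2)$ and $p_a-g=\sum_{z\in Sing(\Gamma)}\delta_z$ is the standard sheaf-theoretic route. The first identity follows correctly from the ideal-sheaf sequence $0\to\mathcal{O}(-d)\to\mathcal{O}\to\mathcal{O}_\Gamma\to 0$ and the Hilbert polynomial, using that an irreducible projective curve is connected so that $h^0(\mathcal{O}_\Gamma)=1$ and $p_a=1-\chi(\mathcal{O}_\Gamma)$. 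The second follows from the normalization sequence $0\to\mathcal{O}_\Gamma\to\nu_*\mathcal{O}_{\tilde{\Gamma}}\to\mathcal{S}\to 0$: the map $\nu$ is finite, so the higher direct images vanish and $\chi(\nu_*\mathcal{O}_{\tilde{\Gamma}})=1-g$; the quotient $\mathcal{S}$ is a skyscraper supported on $Sing(\Gamma)$ because the local ring at a smooth point is already integrally closed, and its stalk at $z$ has length $\delta_z$ by the definition of the delta invariant. You correctly flag this local identification as the only point needing care, and it is immediate from the definition $\delta_z=\dim_{\C}(\nu_*\mathcal{O}_{\tilde{\Gamma}}/\mathcal{O}_\Gamma)_z$ (if one instead defines $\delta_z$ via the conductor or via infinitely near points, a short standard lemma identifies the two, but that is a matter of convention rather than a gap). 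Your argument is complete and is essentially the proof one finds in Serre's and Hartshorne's treatments; the source the paper actually cites reaches the formula in a more topological context, but since the paper reproduces no proof, your cohomological derivation is a perfectly adequate, self-contained justification of the statement as used here.
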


Before starting to prove Theorem \ref{main1}, let us introduce the notion of Euclidean subsets being bi-Lipschitz equivalent at infinity.

\begin{defi}
	Two subsets $X\in\C^n$ and $Y\in\C^m$ are called {\bf bi-Lipschitz equivalent at infinity} if there exist compact subsets $K_1\in\C^n$ and $K_2\in\C^m$ and a bi-Lipschitz homeomorphism $X\setminus K_1\rightarrow Y\setminus K_2.$ 
\end{defi}

\begin{re}\label{remark}
{\rm In order to prove that two algebraic plane curves $X$ and $Y$ are bi-Lischitz equivalent, Renato Targino (see \cite{renato}) showed that is enough to verify the following two conditions:
\begin{enumerate}
	\item There is a bijetion $\varphi\colon Sing(X)\rightarrow Sing(Y)$ such that $(X,p)$ is bi-Lipschitz equivalent to $(Y,\varphi(p))$ as germs, $\forall p\in Sing(X)$;
	\item $X$ and $Y$ are bi-Lipschitz equivalent at infinity.
\end{enumerate} 
Note that the proof of Renato Targino still works  in the case where $X$ and $Y$ are algebraic curves in $\C^n$ (not necessarily $n=2$).}
\end{re}

Now we can prove  Theorem \ref{main1}.


\begin{proof}[Proof of Theorem \ref{main1}] 
Let us suppose that $\Lambda$ and  $\Gamma$ are bi-Lipschitz equivalent, in particular, they have the same genus and they are bi-Lipschitz equivalent at infinity. We are going to prove that their generic plane projections $\Lambda'$ and $\Gamma'$, respectively, satisfy conditions 1) and 2) of Remark \ref{remark}. By using Theorem \ref{infinity}, we see that $\Lambda$ and $\Gamma$ are bi-Lipschitz equivalent at infinity to $\Lambda'$ and $\Gamma'$, respectively. Since, $\Lambda$ and  $\Gamma$ are bi-Lipschitz equivalent, it follows that $\Lambda'$ and $\Gamma'$ are bi-Lipschitz equivalent at infinity as well ($\Lambda'$ and $\Gamma'$ satisfy  item 2) of Remark \ref{remark}).

Before starting to show that $\Lambda'$ and $\Gamma'$ satisfy item 1) of Remark \ref{remark}, let us do some remarks about singularities of plane generic projections $X'$ of a space curve $X$ in $\C^n$. We have a partition of the singular subset $Sing(X')$ into two types of singularities: singularities that come from singularities of $X$ via the associated linear generic projection $X\subset\C^n\rightarrow X'\subset\C^2$ (let us denote the set of such singularities by $S_1(X')$) and the so-called {\bf new nodes} which are singularities that come from double-points of the associated linear generic projection $X\subset\C^n\rightarrow X'\subset\C^2$ (let us denote the set of new nodes by $S_2(X')$).

We resume our proof that  $\Lambda'$ and $\Gamma'$ satisfy item 1) of Remark \ref{remark}. It is clear that the local composition of the bi-Lipschitz homeomorphism $\Lambda'\rightarrow\Gamma'$ and the linear generic projections $\Lambda\subset\C^n\rightarrow \Lambda'\subset\C^2$ and $\Gamma\subset\C^n\rightarrow \Gamma'\subset\C^2$ gives a natural bijection $\varphi\colon S_1(\Lambda')\rightarrow S_1(\Gamma')$ such that, $(\Lambda',p)$ is bi-Lipschitz equivalent to $(\Gamma',\varphi(p))$ as germs, $\forall p\in S_1(\Lambda')$. Next, we are going to extend $\varphi$ to the set of new nodes. Notice that, since $\Lambda$ and $\Gamma$ have the same number of singular points and the same genus, we can deduce by Theorem  \ref{serre} that the number of new nodes which appear in $\Lambda'$ and $\Gamma'$ is the same in both cases. Indeed, since $\Lambda'$ and $\Gamma'$ are bi-Lipschitz equivalent at infinity they have the same degree (see Corollary 3.2 in \cite{bfs}) and they have topologically equivalent germs at infinity (as stated in Theorem 1.5 of \cite{renato}). Moreover $\Gamma'$ and $\Lambda'$ also have the same genus. Now, by Serre's Formula (Theorem \ref{serre}) we see that the number of new nodes must be the same in both cases. Since any two nodes are bi-Lipschitz equivalent as germs, we can consider $\varphi\colon S_2(\Lambda')\rightarrow S_2(\Gamma')$ as being any bijection such that $(\Lambda',p)$ is bi-Lipschitz equivalent to $(\Gamma',\varphi(p))$ as germs, $\forall p\in S_2(\Lambda')$. In other words, according to Remark \ref{remark}, we have proved that $\Lambda'$ and $\Gamma'$ are bi-Lipschitz equivalent. 

 On the other hand, let us suppose that the generic plane projections $\Lambda'$ (of $\Lambda $) and $\Gamma'$ (of $\Gamma$) are bi-Lipschitz equivalent. Thus, by using Theorem \ref{infinity}, we see that $\Lambda$ and $\Gamma$ are bi-Lipschitz equivalent at infinity, i.e., they satisfy item 2) of Remark \ref{remark}. Concerning to item 1) of Remark \ref{remark}, we have natural bijections $\varphi_{\Lambda}\colon Sing(\Lambda)\rightarrow S_1(\Lambda')$ and $\varphi_{\Gamma}\colon Sing(\Gamma)\rightarrow S_1(\Gamma')$ such that $(\Lambda,p)$ (respectively $(\Gamma,q)$) is bi-Lipschitz equivalent to $(\Lambda',\varphi_{\Lambda}(p))$ (respectively $(\Gamma',\varphi_{\Gamma}(q))$) as germs. Now, via the bi-Lipschitz homeomorphism between $\Lambda'$ and $\Gamma'$, the linear generic projections $\Lambda\subset\C^n\rightarrow \Lambda'\subset\C^2$ and $\Gamma\subset\C^n\rightarrow \Gamma'\subset\C^2$ are local bi-Lipschitz homeomorphisms, we have a natural bijection $\varphi'\colon S_1(\Lambda')\rightarrow S_1(\Gamma')$ such that $(\Lambda',p)$  is bi-Lipschitz equivalent to $(\Gamma',\varphi'(p))$ as germs, $\forall p\in \Lambda'$. Finally, by the composite mapping $\varphi_{\Gamma}^{-1}\circ\varphi'\circ\varphi_{\Lambda}$, we conclude that $\Lambda$ and $\Gamma$ satisfy item 1) of Remark \ref{remark}.
\end{proof}

    \end{document}